\def\NZQ{\mathbb}               
\def\ZZ{{\NZQ Z}}
\def\RR{{\NZQ R}}
\def\frk{\mathfrak}               
\def\Phi{{\frk N}}
\def\ab{{\mathbf a}}
\def\eb{{\mathbf e}}
\def\tb{{\mathbf t}}
\def\vb{{\mathbf v}}
\def\ub{{\mathbf u}}
\def\wb{{\mathbf w}}
\def\xb{{\mathbf x}}
\def\yb{{\mathbf y}}
\def\opn#1#2{\def#1{\operatorname{#2}}} 
\opn\gr{gr}
\def\Ec{{\mathcal E}}
\def\Jc{{\mathcal J}}
\def\Gc{{\mathcal G}}
\def\Fc{{\mathcal F}}
\def\Pc{{\mathcal P}}
\def\Cc{{\mathcal C}}
\def\Vol{{\textnormal{Vol}}}
\newtheorem{Theorem}{Theorem}[section]
\newtheorem{Lemma}[Theorem]{Lemma}
\newtheorem{Corollary}[Theorem]{Corollary}
\newtheorem{Proposition}[Theorem]{Proposition}
\theoremstyle{definition}
\newtheorem{Remark}[Theorem]{Remark}
\newtheorem{Problem}[Theorem]{Problem}
\newtheorem{Conjecture}[Theorem]{Conjecture}
\let\epsilon\varepsilon
\let\phi=\varphi
\let\kappa=\varkappa
\opn\dis{dis}
\opn\height{height}
\opn\dist{dist}
\def\pnt{{\raise0.5mm\hbox{\large\bf.}}}
\opn\Lex{Lex}
\opn\conv{conv}
\begin{document}

\title{Enriched chain polytopes}
\author{Hidefumi Ohsugi and Akiyoshi Tsuchiya}
\address{Hidefumi Ohsugi,
	Department of Mathematical Sciences,
	School of Science and Technology,
	Kwansei Gakuin University,
	Sanda, Hyogo 669-1337, Japan} 
\email{ohsugi@kwansei.ac.jp}

\address{Akiyoshi Tsuchiya,
Graduate school of Mathematical Sciences,
University of Tokyo,
Komaba, Meguro-ku, Tokyo 153-8914, Japan} 
\email{akiyoshi@ms.u-tokyo.ac.jp}

\subjclass[2010]{05A15, 05C31, 13P10, 52B12, 52B20}
\keywords{reflexive polytope, flag triangulation, $\gamma$-positive, real-rooted, left enriched partition, left peak polynomial, Gal's Conjecture, Kruskal--Katona inequalities}

\begin{abstract}
Stanley introduced a lattice polytope $\mathcal{C}_P$ arising from a finite poset $P$, which is called the chain polytope of $P$.
The geometric structure of $\mathcal{C}_P$ has good relations with the combinatorial structure of $P$. In particular, the Ehrhart polynomial of $\mathcal{C}_P$ is given by the order polynomial of $P$.
In the present paper, associated to $P$, we introduce a lattice polytope $\mathcal{E}_{P}$, which is called the enriched chain polytope of $P$, and investigate geometric and combinatorial properties of this polytope.
By virtue of the algebraic technique on Gr\"{o}bner bases, we see that 
$\mathcal{E}_P$ is a reflexive polytope with a flag regular unimodular triangulation.
Moreover, the $h^*$-polynomial of $\mathcal{E}_P$ is equal to the $h$-polynomial of a flag triangulation of a sphere.
On the other hand, by showing that the Ehrhart polynomial of $\mathcal{E}_P$ coincides with   the left enriched order polynomial of $P$, 
it follows from  works of Stembridge and Petersen that the $h^*$-polynomial of $\mathcal{E}_P$ is $\gamma$-positive.
Stronger, we prove that the $\gamma$-polynomial of $\mathcal{E}_P$ is equal to the $f$-polynomial of a flag simplicial complex.
\end{abstract}

\maketitle

\section*{Introduction}

A {\em lattice polytope} $\Pc  \subset \RR^n$ of dimension $n$ is a convex polytope all of whose vertices have integer coordinates.
Given a positive integer $m$, we define
$$L_{\Pc}(m)=|m \Pc \cap \ZZ^n|.$$
The study on $L_{\Pc}(m)$ originated in Ehrhart \cite{Ehrhart} who proved that $L_{\Pc}(m)$ is a polynomial in $m$ of degree $n$ with the constant term $1$.
We say that $L_{\Pc}(m)$ is the \textit{Ehrhart polynomial} of $\Pc$.
The generating function of the lattice point enumerator, i.e., the formal power series
$$\text{Ehr}_\Pc(x)=1+\sum\limits_{k=1}^{\infty}L_{\Pc}(k)x^k$$
is called the \textit{Ehrhart series} of $\Pc$.
It is well known that it can be expressed as a rational function of the form
$$\text{Ehr}_\Pc(x)=\frac{h^*(\Pc,x)}{(1-x)^{n+1}}.$$
The polynomial $h^*(\Pc,x)$ is a polynomial in $x$ of degree at most $n$ with nonnegative integer coefficients (\cite{Stanleynonnegative}) and it
is called
the \textit{$h^*$-polynomial} (or the \textit{$\delta$-polynomial}) of $\Pc$. 
Moreover, one has $\Vol(\Pc)=h^*(\Pc,1)$, where $\Vol(\Pc)$ is the normalized volume of $\Pc$.

In \cite{twoposetpolytopes}, Stanley introduced a class of  lattice polytopes associated with finite partially ordered sets. 
Let $P = [n]:=\{1,2,\dots,n\}$ be a partially ordered set (poset, for short).
An {\em antichain} of $P$ is a subset of $P$ consisting of pairwise incomparable elements of $P$.
Note that the empty set $\emptyset$ is an antichain of $P$.
The {\em chain polytope} $\Cc_P$ of $P$
is the convex hull of 
$$
\{
\eb_{i_1} + \dots + \eb_{i_k}
:
\{i_1, \dots,  i_k\} \mbox{ is an antichain of } P
\},
$$
where $\eb_i$ is $i$-th unit coordinate vector of $\RR^n$ and the empty set $\emptyset$ corresponds to the origin ${\bf 0}$ of $\RR^n$.
Then $\Cc_P$ is a lattice polytope of dimension $n$.
There is a close interplay between the combinatorial structure of $P$ and the geometric structure of $\Cc_P$.
For instance, it is known the Ehrhart polynomial $L_{\Cc_P}(m)$ and the order polynomial $\Omega_P(m)$ are related by $\Omega_P(m+1)=L_{\Cc_P}(m)$.
On the other hand, $\Cc_P$ has many interesting properties.
In particular,  the toric ring of $\Cc_P$ is an algebra with straightening laws, and thus the toric ideal possesses a squarefree quadratic initial ideal (\cite{HibiLi}).
Moreover,  $\Cc_P$ is of interest in representation theory (\cite{represent})  and statistics (\cite{Sul}).

Now, we introduce a new class of lattice polytopes associated with posets.
 The {\em enriched chain polytope} $\Ec_P$  is the convex hull of 
$$
E(P)=
\{
\pm \eb_{i_1} \pm \dots \pm \eb_{i_k}
:
\{i_1, \dots,  i_k\} \mbox{ is an antichain of } P
\}.
$$
Then $\dim \Ec_{P} =n$.
It is easy to see that $\Ec_P$ is centrally symmetric
(i.e., for any facet $\Fc$ of $\Ec_P$, $-\Fc$ is also a facet of $\Ec_P$), and the origin  ${\bf 0}$ of $\RR^n$ is the unique interior lattice point of $\Ec_P$.
In the present paper, we investigate geometric and combinatorial properties of $\Ec_P$.

A lattice polytope $\Pc \subset \RR^n$ of dimension $n$ is called \textit{reflexive} if the origin of $\RR^n$ is a unique lattice point belonging to the interior of $\Pc$ and its dual polytope
\[\Pc^\vee:=\{\yb \in \RR^n  :  \langle \xb,\yb \rangle \leq 1 \ \text{for all}\  \xb \in \Pc \}\]
is also a lattice polytope, where $\langle \xb,\yb \rangle$ is the usual inner product of $\RR^n$.
It is known that reflexive polytopes correspond to Gorenstein toric Fano varieties, and they are related to
mirror symmetry (see, e.g., \cite{mirror,Cox}).
In each dimension there exist only finitely many reflexive polytopes 
up to unimodular equivalence (\cite{Lag})
and all of them are known up to dimension $4$ (\cite{Kre}).
Recently, several classes of reflexive polytopes are constructed by the virtue of the algebraic technique on Gr\"{o}bner bases (c.f., \cite{HTomega,HTperfect,harmony}).
By showing the toric ideal of $\Ec_P$ possesses a squarefree quadratic initial ideal (Theorem \ref{thm:grobner}), in Section \ref{sec:reflexive}, we prove the following.
\begin{Theorem}
	\label{thm:flag}
	Let $P=[n]$ be a poset.
	Then $\Ec_P$ is a reflexive polytope
	having a flag regular unimodular triangulation
such that each maximal simplex 
contains the origin as a vertex.
\end{Theorem}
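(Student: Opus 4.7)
The plan is to leverage Theorem~\ref{thm:grobner} via the standard correspondence between squarefree initial ideals of toric ideals and regular unimodular triangulations, and then deduce reflexivity by exhibiting the triangulation as a ``cone from the origin.'' The flag regular unimodular triangulation will be read off directly from the initial ideal; reflexivity will follow once every maximal simplex is known to use ${\bf 0}$ as a vertex.

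More concretely, Theorem~\ref{thm:grobner} fixes a monomial order $<$ for which $\mathrm{in}_<(I_{\Ec_P})$ is a squarefree quadratic monomial ideal. By the standard correspondence, $\mathrm{in}_<(I_{\Ec_P})$ is the Stanley--Reisner ideal of a regular unimodular triangulation $\Delta$ of $\Ec_P$ on the vertex set $E(P)$; since the generators are quadratic the minimal non-faces of $\Delta$ are edges, so $\Delta$ is flag. The key combinatorial step is then to show that ${\bf 0}$, which corresponds to the empty antichain and hence lies in $E(P)$, is a vertex of every maximal simplex of $\Delta$. This amounts to verifying that the variable $x_{{\bf 0}}$ does not divide any minimal generator of $\mathrm{in}_<(I_{\Ec_P})$. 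The mechanism is that any binomial in $I_{\Ec_P}$ involving $x_{{\bf 0}}$ has the form $x_{{\bf 0}}\,x_v - x_w x_z$ with $w+z=v$ in $\ZZ^n$, and the monomial order produced in the proof of Theorem~\ref{thm:grobner} is arranged so that $x_w x_z$ is the leading term; thus $x_{{\bf 0}}$ never appears in any leading monomial. Equivalently, $\Delta$ is the cone from ${\bf 0}$ over its induced subcomplex on $E(P)\setminus\{{\bf 0}\}$, so each maximal simplex contains ${\bf 0}$ as a vertex.

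Finally, reflexivity follows by a standard argument: every maximal simplex of $\Delta$ has the form $\sigma = \conv({\bf 0}, v_1, \ldots, v_n)$ and is unimodular, so $v_1, \ldots, v_n$ form a $\ZZ$-basis of $\ZZ^n$, and the facet of $\sigma$ opposite ${\bf 0}$ is supported by an affine hyperplane $\{\, x \in \RR^n : \langle a, x\rangle = 1 \,\}$ with $a \in \ZZ^n$ primitive. Since $\Delta$ restricts to a triangulation of each facet of $\Ec_P$, every facet of $\Ec_P$ is tiled by such opposite facets, so every supporting hyperplane of $\Ec_P$ has lattice distance $1$ from ${\bf 0}$; combined with the already noted fact that ${\bf 0}$ is the unique interior lattice point, this is the reflexivity condition. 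The main obstacle I anticipate is the second step: neither flagness nor unimodularity alone forces ${\bf 0}$ into every maximal simplex, and one must inspect the explicit Gröbner basis supplied by Theorem~\ref{thm:grobner} to confirm that $x_{{\bf 0}}$ is never a leading variable. Everything else is either a direct application of the standard Gröbner-to-triangulation dictionary or the routine ``unimodular star centered at ${\bf 0}$ implies reflexive'' observation.
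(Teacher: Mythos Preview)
Your proposal is correct and follows essentially the same route as the paper: apply Theorem~\ref{thm:grobner} (whose statement already records that the squarefree quadratic initial ideal avoids $x_\emptyset$) together with the Sturmfels correspondence to obtain the flag regular unimodular triangulation containing ${\bf 0}$ in every maximal simplex, and then deduce reflexivity. The only difference is cosmetic: the paper packages the ``unimodular star at the origin $\Rightarrow$ reflexive'' step as Lemma~\ref{genten_yowaku} (quoted from \cite{HMOS}), whereas you spell that argument out directly.
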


We now turn to the discussion of the Ehrhart polynomial and the $h^*$-polynomial of $\Ec_P$.
In fact, the Ehrhart polynomial of $\Ec_P$ is equal to a combinatorial polynomial associated to $P$.
In Section \ref{sec:enriched}, we prove the following.
\begin{Theorem}
	\label{ehrhartpolynomial}
	Let $P = [n]$ be a naturally labeled poset. 
	Then one has
		\[L_{\Ec_P}(m)
	=
	\Omega_{P}^{(\ell)}(m),
	\]
	where 	$\Omega_{P}^{(\ell)}(m)$ is the left enriched order polynomial of $P$.
\end{Theorem}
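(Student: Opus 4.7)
The plan is to identify the lattice points of $m\Ec_P$ with the left enriched $P$-partitions of $P$ of magnitude at most $m$, so that both sides of the claimed identity enumerate the same set of combinatorial objects.

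First I would determine an inequality description of $\Ec_P$. Since $\Ec_P$ is centrally symmetric and its intersection with the nonnegative orthant is precisely the chain polytope $\Cc_P$, each facet $\sum_{i\in C} x_i \leq 1$ of $\Cc_P$ (arising from a maximal chain $C$ of $P$) gives rise to $2^{|C|}$ facets of $\Ec_P$ of the form $\sum_{i\in C}\epsilon_i x_i \leq 1$ with $\epsilon \in \{\pm 1\}^C$. In short,
\[
\Ec_P = \bigl\{x \in \RR^n : \textstyle\sum_{i\in C}|x_i|\leq 1 \text{ for every chain } C \text{ of } P\bigr\},
\]
which can be verified by reflecting any point of $\Ec_P$ into the nonnegative orthant and invoking the known facet description of $\Cc_P$. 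Consequently,
\[
m\Ec_P \cap \ZZ^n = \bigl\{x \in \ZZ^n : \textstyle\sum_{i\in C}|x_i|\leq m \text{ for every chain } C \text{ of } P\bigr\}.
\]

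Next, following the framework of Stembridge and Petersen, a left enriched $P$-partition of magnitude at most $m$ is a map $f \colon P \to \{0,\pm 1,\ldots,\pm m\}$ that is monotone under the order $0 <' -1 <' +1 <' -2 <' +2 <' \cdots$ and satisfies a sign-tie-breaking rule on the values $f(i)$ shared by comparable elements. I would construct a bijection $\Phi$ from $m\Ec_P \cap \ZZ^n$ to the set of such partitions, modeled on the transfer map between the order polytope and the chain polytope. Explicitly, I would set
\[
|\Phi(x)(i)| = \max_{C} \textstyle\sum_{j\in C} |x_j|,
\]
where the maximum runs over chains $C$ of $P$ ending at $i$, and assign the sign of $\Phi(x)(i)$ from the signs of the entries $x_j$ along an achieving chain. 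The inverse map sends $f$ to the vector $x$ with $x_i = \operatorname{sign}(f(i))\cdot\bigl(|f(i)|-\max_{j\lessdot_P i}|f(j)|\bigr)$, with the convention that the maximum over the empty set is $0$.

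The main difficulty will be verifying well-definedness of this bijection in both directions: checking that $\Phi(x)$ respects the sign-tie-breaking rule specific to the \emph{left} enriched setting (the delicate case arising when $x_j = 0$ for some $j$ on a chain achieving the maximum), and that the inverse map returns a vector satisfying $\sum_{i\in C}|x_i|\leq m$ on every chain. Here the natural labeling hypothesis on $P$ plays the role of disambiguating signs in the enriched setup, matching the convention used in the definition of $\Omega_P^{(\ell)}$. Once this bijection is in hand, the equality $L_{\Ec_P}(m) = \Omega_P^{(\ell)}(m)$ follows by comparing cardinalities.
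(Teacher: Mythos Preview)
Your approach is essentially the paper's: both set up a signed analogue of Stanley's transfer map between $m\Ec_P \cap \ZZ^n$ and left enriched $P$-partitions, and your inverse map coincides verbatim with the paper's map $\varphi$ (writing $\min_{j\lessdot i}(|f(i)|-|f(j)|)=|f(i)|-\max_{j\lessdot i}|f(j)|$). The only imprecision is your sign rule for $\Phi(x)(i)$: rather than reading signs ``along an achieving chain,'' the paper simply assigns $\Phi(x)(i)$ the sign of $x_i$ alone (with $0$ counted as nonnegative), which makes condition~(ii) immediate since $|\Phi(x)(i)|=|\Phi(x)(j)|$ for $i<_P j$ forces $x_j=0$.
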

In \cite{StembridgeEnriched}, Stembridge developed the theory of {\em enriched $(P,\omega)$-partitions}, in analogy with Stanley's theory of $(P,\omega)$-partitions.
In the theory, {\em enriched order polynomials} were introduced.
On the other hand, Petersen \cite{Petersen} introduced slightly different notion, {\em left enriched $(P,\omega)$-partitions} and {\em left enriched order polynomials}. Please refer to Section \ref{sec:enriched} for the details.
Therefore,  from Theorem \ref{ehrhartpolynomial}  we call $\Ec_{P}$ the ``enriched" chain polytope of $P$.

Next, we discuss Gal's Conjecture for enriched chain polytopes.
Gal \cite{Gal} conjectured that the $h$-polynomial  of a flag triangulation of a sphere is $\gamma$-positive. On the other hand, Theorem~\ref{thm:flag} implies that $h^*(\Ec_P,x)$ coincides with the $h$-polynomial of a flag triangulation of a sphere (Corollary \ref{cor:flag}).
Therefore, $h^*(\Ec_P,x)$ is expected to be $\gamma$-positive.
From works of Stembridge \cite{StembridgeEnriched} and Petersen \cite{Petersen} and Theorem \ref{ehrhartpolynomial}, we can obtain the following.
\begin{Theorem}
	\label{maintheorem}
	Let $P = [n]$ be a naturally labeled poset. 
	Then the $h^*$-polynomial of $\Ec_P$ is
	$$
	h^*(\Ec_P, x)
	=  (x+1)^n \ W_{P}^{(\ell)}  \left(  \frac{4x}{(x+1)^2} \right),
	$$
	where $W_{P}^{(\ell)}(x)$ is  the left peak polynomial of $P$.
	In particular, $h^*(\Ec_P, x)$ is $\gamma$-positive.
	Moreover, $	h^*(\Ec_P, x)$ is real-rooted if and only if $W_{P}^{(\ell)}  (x)$ is real-rooted.
\end{Theorem}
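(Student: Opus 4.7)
The plan is to identify the Ehrhart series of $\Ec_P$ with a known generating function for the left enriched order polynomial. By Theorem \ref{ehrhartpolynomial} one has
$$\text{Ehr}_{\Ec_P}(x) = \sum_{m \geq 0} \Omega_P^{(\ell)}(m)\, x^m,$$
and Petersen \cite{Petersen} has established that the right-hand side equals
$$\frac{(1+x)^n\, W_P^{(\ell)}\!\left(\frac{4x}{(1+x)^2}\right)}{(1-x)^{n+1}}.$$
Since $\text{Ehr}_{\Ec_P}(x) = h^*(\Ec_P, x)/(1-x)^{n+1}$ by definition, reading off the numerators yields the first assertion, provided one checks the routine initial identity $L_{\Ec_P}(0) = 1 = \Omega_P^{(\ell)}(0)$.

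The $\gamma$-positivity then follows by a direct expansion. Writing $W_P^{(\ell)}(y) = \sum_i w_i^{(\ell)} y^i$ and substituting $y = 4x/(1+x)^2$, one obtains
$$h^*(\Ec_P, x) = \sum_{i} 4^i\, w_i^{(\ell)}\, x^i (1+x)^{n-2i},$$
which is the $\gamma$-expansion with $\gamma_i = 4^i w_i^{(\ell)} \geq 0$, since the left peak polynomial has nonnegative integer coefficients by its combinatorial definition.

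For the real-rootedness equivalence, I would analyze the substitution $y = 4x/(1+x)^2$ directly. Each root $r$ of $W_P^{(\ell)}$ contributes the quadratic factor $rx^2 + (2r - 4)x + r$ to $h^*(\Ec_P, x)$, whose discriminant equals $16(1-r)$. Both $W_P^{(\ell)}$ and $h^*(\Ec_P, x)$ have nonnegative coefficients, so their real roots, when they exist, are nonpositive and in particular at most $1$; hence if $W_P^{(\ell)}$ is real-rooted, every such quadratic has real roots and $h^*(\Ec_P, x)$ is real-rooted. Conversely, if some root $r$ of $W_P^{(\ell)}$ is nonreal, then the associated quadratic has no real roots, since a real root $x$ would force $r = 4x/(1+x)^2 \in \RR$; the product over the conjugate pair $r, \bar r$ then contributes four nonreal roots to $h^*(\Ec_P, x)$.

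The main obstacle is locating the precise form of Petersen's generating-function identity in \cite{Petersen} and verifying that the conventions for a naturally labeled poset and for the initial value $\Omega_P^{(\ell)}(0)$ agree with ours; once this matching is established, the remaining steps are formal algebraic manipulations and a short discriminant computation.
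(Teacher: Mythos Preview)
Your proposal is correct and follows essentially the same approach as the paper: combine Theorem~\ref{ehrhartpolynomial} with Petersen's generating-function identity (stated in the paper as Lemma~\ref{lem:petersen}) to obtain the formula for $h^*(\Ec_P,x)$, and then read off the $\gamma$-expansion. The only minor difference is that for the real-rootedness equivalence the paper invokes the general fact that a $\gamma$-positive polynomial is real-rooted if and only if its $\gamma$-polynomial is, whereas you unpack this via the explicit discriminant computation; the content is the same.
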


\noindent
Note that $W_{P}^{(\ell)}  (x)$ is not necessarily real-rooted (\cite{Stembridge}).

Finally, we discuss Nevo-Petersen's Conjecture for enriched chain polytopes.
In \cite{NevoPetersen}, Nevo and Petersen made a stronger conjecture than Gal's Conjecture.
They conjectured that the $h$-polynomial  of a flag triangulation of a sphere is equal to the $f$-polynomial of a simplicial complex. 
In other words, the coefficients of the $\gamma$-polynomial of a flag triangulation of a sphere satisfy Kruskal-Katona inequalities.
In Section \ref{sec:complex}, we construct explicit flag simplicial complexes whose $f$-polynomials are the $\gamma$-polynomials of enriched chain polytopes (Theorem \ref{thm:fpoly}).


\subsection*{Acknowledgment}
The authors were partially supported by JSPS KAKENHI 18H01134 and 16J01549.

\section{squarefree quadratic Gr\"{o}bner bases}
\label{sec:reflexive}
In this section, we prove Theorem \ref{thm:flag}.
First, we see the geometric structure of $\Ec_{P}$ of a finite poset $P=[n]$.
Given $\varepsilon = (\varepsilon_1,\ldots, \varepsilon_n) \in \{-1,1\}^n$,
let ${\mathcal O}_\varepsilon$ denote the closed orthant 
$\{ (x_1,\ldots, x_n) \in \RR^n  : x_i  \varepsilon_i \ge 0 \mbox{ for all } i \in [n]\}$.
Let ${\mathcal L}(P)$ denote the set of linear extensions of $P$.
It is known \cite[Corollary 4.2]{twoposetpolytopes} that 
the normalized volume of the chain polytope $\Cc_P$ is $|{\mathcal L}(P)|$.

\begin{Lemma}
\label{keylemma}
Work with the same notation as above.
Then each $\Ec_P \cap {\mathcal O}_\varepsilon$ is the convex hull of the set
$E(P) \cap {\mathcal O}_\varepsilon$
and unimodularly equivalent to the chain polytope $\Cc_P$ of $P$.
In particular, the normalized volume of $\Ec_P$ is 
$\Vol(\Ec_P)=2^n \Vol(\Cc_P) = 2^n | {\mathcal L}(P) |$.
\end{Lemma}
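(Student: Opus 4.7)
I would exploit the coordinate-sign symmetry of $\Ec_P$ to reduce the lemma to the single case of the positive orthant. For $\varepsilon\in\{-1,1\}^n$, define the diagonal map $\phi_\varepsilon\colon\RR^n\to\RR^n$ by $(x_1,\dots,x_n)\mapsto(\varepsilon_1 x_1,\dots,\varepsilon_n x_n)$; this is unimodular, sends $\Oc_\varepsilon$ bijectively onto $\RR^n_{\geq 0}$, and permutes the generating set $E(P)$, since if $v=\sum_j\delta_j\eb_{i_j}$ corresponds to an antichain $\{i_1,\dots,i_k\}$ with signs $\delta_j\in\{\pm 1\}$, then $\phi_\varepsilon(v)=\sum_j(\delta_j\varepsilon_{i_j})\eb_{i_j}\in E(P)$. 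Consequently $\phi_\varepsilon(\Ec_P)=\Ec_P$, and the whole problem reduces to proving
\[
\Ec_P\cap\RR^n_{\geq 0}=\Cc_P.
\]

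The inclusion $\Cc_P\subseteq\Ec_P\cap\RR^n_{\geq 0}$ is immediate: every vertex $\eb_{i_1}+\cdots+\eb_{i_k}$ of $\Cc_P$ lies in $E(P)\cap\RR^n_{\geq 0}\subset\Ec_P$. For the reverse direction I would invoke Stanley's facet description of the chain polytope,
\[
\Cc_P=\bigl\{\xb\in\RR^n_{\geq 0}\,:\,\textstyle\sum_{i\in C}x_i\leq 1\ \text{for every maximal chain }C\text{ of }P\bigr\}.
\]
Write an arbitrary $\xb\in\Ec_P\cap\RR^n_{\geq 0}$ as a convex combination $\xb=\sum_v\lambda_v v$ with $v\in E(P)$. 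For any maximal chain $C$ and any generator $v=\sum_j\delta_j\eb_{i_j}\in E(P)$ supported on an antichain $\{i_1,\dots,i_k\}$, the intersection $\{i_1,\dots,i_k\}\cap C$ has at most one element, so $\sum_{i\in C}v_i\in\{-1,0,1\}$; averaging with the weights $\lambda_v$ then yields $\sum_{i\in C}x_i\leq 1$, placing $\xb$ in $\Cc_P$.

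Pulling back by $\phi_\varepsilon$ produces the required unimodular equivalence $\Ec_P\cap\Oc_\varepsilon\cong\Cc_P$; because $\phi_\varepsilon(E(P)\cap\Oc_\varepsilon)$ is precisely the vertex set of $\Cc_P$, this also forces $\Ec_P\cap\Oc_\varepsilon=\conv(E(P)\cap\Oc_\varepsilon)$, giving the first assertion. For the volume statement, the $2^n$ orthants cover $\RR^n$ with pairwise intersections of lower dimension, so additivity of normalized volume together with unimodular invariance yields $\Vol(\Ec_P)=\sum_\varepsilon\Vol(\Ec_P\cap\Oc_\varepsilon)=2^n\Vol(\Cc_P)$, and $\Vol(\Cc_P)=|\mathcal{L}(P)|$ is Stanley's result already quoted in the text. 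The main obstacle is the reverse inclusion $\Ec_P\cap\RR^n_{\geq 0}\subseteq\Cc_P$; once one has access to Stanley's chain-inequality facet description of $\Cc_P$, it is dispatched by the elementary combinatorial observation that an antichain meets a chain in at most one element.
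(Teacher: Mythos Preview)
Your argument is correct, and it is genuinely different from the paper's proof. Both proofs reduce to showing that a point of $\Ec_P$ lying in a closed orthant can be written as a convex combination of generators from $E(P)$ lying in that same orthant; the difference is in how this is accomplished. The paper gives a self-contained \emph{rewriting} argument: starting from an arbitrary convex combination $\sum\lambda_i\ab_i$ with $\ab_i\in E(P)$, whenever two of the $\ab_i$ have opposite sign in some coordinate $k$, one replaces the pair $(\ab_i,\ab_j)$ by $(\ab_i-\eb_k,\ab_j+\eb_k)$ (still in $E(P)$) and iterates until all generators lie in the correct orthant. No facet description of $\Cc_P$ is invoked.

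You instead pull in Stanley's chain-inequality description of $\Cc_P$ and verify those inequalities directly for any $\xb\in\Ec_P\cap\RR^n_{\ge 0}$, using only that an antichain meets a chain in at most one element. This is shorter and more conceptual, at the cost of importing an external (though standard) characterisation of $\Cc_P$. Your explicit use of the sign-flip symmetry $\phi_\varepsilon$ to handle all orthants at once is also cleaner than what the paper writes; the paper leaves that reduction implicit.
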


\begin{proof}
It is enough to show that $\Ec_P \cap {\mathcal O}_\varepsilon \subset {\rm Conv} (E(P) \cap {\mathcal O}_\varepsilon)$.
Let $\xb = (x_1,\ldots,x_n) \in \Ec_P \cap {\mathcal O}_\varepsilon$.
Then $\xb = \sum_{i=1}^s \lambda_i \ab_i$, where $\lambda_i >0$,
$\sum_{i=1}^s \lambda_i = 1$, and each $\ab_i$ belongs to $E(P)$.
Suppose that 
$k$-th component of $\ab_i$ is positive and 
$k$-th component of $\ab_j$ is negative.
Then we replace $\lambda (\ab_i  + \ab_j)$ in 
$\xb = \sum_{i=1}^s \lambda_i \ab_i$ with $\lambda((\ab_i -\eb_k) + (\ab_j + \eb_k))$, where $\lambda = \min \{\lambda_i, \lambda_j  \}$
and $\ab_i -\eb_k ,\ab_j + \eb_k \in E(P)$.
Repeating this procedure finitely many times,
we may assume that
$k$-th component of each vector $\ab_i$ is nonnegative (resp.~nonpositive)
if $x_k \ge 0$ (resp.~$x_k \le 0$).
Then each $\ab_i$ belongs to $E(P) \cap {\mathcal O}_\varepsilon$ and hence
$\xb \in {\rm Conv} (E(P) \cap {\mathcal O}_\varepsilon)$.
\end{proof}

In order to show that $\Ec_P$ is reflexive and has a flag regular unimodular triangulation,
we use an algebraic technique on Gr\"obner bases.
We recall basic materials and notation on toric ideals.
Let $K[{\bf t^{\pm1}}, s] = K[t_{1}^{\pm1}, \ldots, t_{n}^{\pm1}, s]$
be the Laurent polynomial ring in $n+1$ variables over a field $K$. 
If $\ab = (a_{1}, \ldots, a_{n}) \in \ZZ^{n}$, then
${\bf t}^{\ab}s$ is the Laurent monomial
$t_{1}^{a_{1}} \cdots t_{n}^{a_{n}}s \in K[{\bf t^{\pm1}}, s]$. 
Let  $\Pc \subset \RR^{n}$ be a lattice polytope and $\Pc \cap \ZZ^n=\{\ab_1,\ldots,\ab_d\}$.
Then, the \textit{toric ring}
of $\Pc$ is the subalgebra $K[\Pc]$ of $K[{\bf t^{\pm1}}, s] $
generated by
$\{\tb^{\ab_1}s ,\ldots,\tb^{\ab_d}s \}$ over $K$.
We regard $K[\Pc]$ as a homogeneous algebra by setting each $\text{deg } \tb^{\ab_i}s=1$.
Let $K[\xb]=K[x_1,\ldots, x_d]$ denote the polynomial ring in $d$ variables over $K$.
The \textit{toric ideal} $I_{\Pc}$ of $\Pc$ is the kernel of the surjective homomorphism $\pi : K[\xb] \rightarrow K[\Pc]$
defined by $\pi(x_i)=\tb^{\ab_i}s$ for $1 \leq i \leq d$.
It is known that $I_{\Pc}$ is generated by homogeneous binomials.
See, e.g., \cite{sturmfels1996}.
The following lemma follows from the same argument in \cite[Proof of Lemma 1.1]{HMOS}.

\begin{Lemma}
\label{genten_yowaku}
	Let $\Pc \subset \RR^n$ be a lattice polytope of dimension $n$ such that the origin of $\RR^n$ is contained 
	in its interior.
	Suppose that any lattice point in $\ZZ^n$ is a linear integer combination of the lattice points in $\Pc$.
If 
there exists a monomial order such that
the initial ideal is generated by squarefree monomials which do not contain
the variable corresponding to the origin,
then $\Pc$ is reflexive and has a regular unimodular triangulation.
\end{Lemma}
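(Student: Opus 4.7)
The plan is to combine the standard correspondence between squarefree initial ideals and regular unimodular triangulations with a short volume computation that forces every facet of $\Pc$ to lie at lattice distance one from the origin.

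First, I would invoke Sturmfels' theorem (see \cite{sturmfels1996}): the given squarefree initial ideal is the Stanley--Reisner ideal of a regular unimodular triangulation $\Delta$ of $\Pc$. The hypothesis that the lattice points of $\Pc$ generate $\ZZ^n$ as an abelian group is exactly what ensures that ``unimodular'' is measured with respect to the ambient lattice $\ZZ^n$, so each maximal simplex of $\Delta$ has normalized volume one. This already supplies the regular unimodular triangulation claimed in the conclusion.

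Next, I would translate the assumption that no generator of the initial ideal involves the variable corresponding to $\mathbf{0}$: via the Stanley--Reisner correspondence this means no minimal non-face of $\Delta$ contains $\mathbf{0}$, hence $\mathbf{0}$ is a vertex of every maximal simplex of $\Delta$. To obtain reflexivity it then suffices to check that every facet $F$ of $\Pc$ lies at lattice distance one from $\mathbf{0}$. Fix such an $F$; the subcomplex of $\Delta$ consisting of simplices contained in $F$ is a unimodular triangulation of $F$, and picking any $(n-1)$-dimensional simplex $\sigma$ of this subcomplex yields a maximal simplex $\tau := \conv(\sigma \cup \{\mathbf{0}\})$ of $\Delta$ that is unimodular in $\ZZ^n$. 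Thus $\vol(\tau) = 1/n!$, while on the other hand $\vol(\tau) = \frac{1}{n} \cdot d \cdot \vol(\sigma) = d/n!$, where $d$ is the lattice distance from $\mathbf{0}$ to the affine hull of $F$ and $\vol(\sigma) = 1/(n-1)!$ by unimodularity in the hyperplane lattice. Comparing gives $d = 1$, so every facet of $\Pc$ sits at lattice distance one from the origin, which together with $\mathbf{0}$ being interior is the standard characterization of reflexivity.

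The main subtlety I anticipate is the technical step of verifying cleanly that the subcomplex of $\Delta$ consisting of simplices lying in a facet $F$ really is an $(n-1)$-dimensional unimodular triangulation of $F$ in the lattice induced on the affine hull of $F$, and that the two notions of normalized volume (ambient in $\ZZ^n$ versus hyperplane) align via the factor $d$. These are routine facts about regular unimodular triangulations of lattice polytopes, but they are the points where one must be careful in writing up the argument; once they are in place the proof reduces to the short calculation above.
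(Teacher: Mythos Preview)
Your argument is correct and is essentially the standard one; the paper itself offers no proof but simply cites \cite[Proof of Lemma~1.1]{HMOS}, which proceeds along the same lines. For the subtlety you flag at the end, note that once $\tau=\conv(\mathbf{0},w_0,\dots,w_{n-1})$ is unimodular the $w_i$ form a $\ZZ$-basis of $\ZZ^n$, so writing the facet hyperplane as $\langle a,x\rangle=b$ with $a$ primitive forces $a(\ZZ^n)=b\ZZ$ and hence $|b|=1$ immediately, sidestepping any delicate comparison of ambient and hyperplane normalized volumes.
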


Let $P=[n]$ be a poset and let $R_P$ denote the polynomial ring 
$$
R_P =K[x_{i_1,\dots,i_k}^\varepsilon :  
\{i_1, \dots, i_k\} \mbox{ is an antichain of } P, \ \varepsilon \in \{-1, 1\}^k]
$$
in $|\Ec_P \cap \ZZ^n|$ variables over a field $K$.
In particular, the origin corresponds to the variable $x_\emptyset$.
Then the toric ideal $I_{\Ec_P}$ of $\Ec_P$ is the kernel of a ring homomorphism 
 $\pi: R_P \rightarrow K[\tb^{\pm1}, s]$
defined by
$\pi(x_{i_1,\dots,i_k}^{(\varepsilon_1,\dots, \varepsilon_k)}) = t_{i_1}^{\varepsilon_1}
\dots t_{i_k}^{\varepsilon_k} s$.
In addition, 
$$
I_{\Ec_P} \cap K[x_{i_1,\dots,i_k}^{(1,\dots,1)} :  
\{i_1, \dots, i_k\} \mbox{ is an antichain of } P]$$
is the toric ideal $I_{\Cc_P}$ of the chain polytope $\Cc_P$ of $P$.
Hibi and Li essentially constructed a squarefree quadratic  initial ideal of $I_{\Cc_P}$ in \cite{HibiLi}.
Let $\Jc(P)$ be the finite distributive lattice consisting
of all poset ideals of $P$, ordered by inclusion.
Given a subset $Z \subset P$, let $\max (Z)$ denote the set of all maximal elements of $Z$.
Then $\max (Z)$ is an antichain of $P$.
For a subset $Y$ of $P$, the poset ideal of $P$ generated by $Y$ is
the smallest poset ideal of $P$ which contains $Y$.
Given poset ideals $I, J \in \Jc(P)$, 
let $I * J$ denote the poset ideal of $P$
generated by $\max(I \cap J) \cap (\max(I) \cup \max(J))$.
Then Hibi and Li proved in \cite[Proof of Theorem~2.1]{HibiLi} that
 the set of all binomials of the form
$$
x_{\max(I)}^{(1,\dots,1)} x_{\max(J)}^{(1,\dots,1)} 
-
x_{\max(I \cup J)}^{(1,\dots,1)} x_{\max(I*J)}^{(1,\dots,1)} 
$$
is a Gr\"obner basis of $I_{\Cc_P}$ with respect to a monomial order $<$.
Here the initial monomial of each binomial is the first monomial.
It is known \cite[Proposition~1.11]{sturmfels1996} that there exists a nonnegative weight vector
$\wb \in \RR^{|\Jc(P)|}$ such that ${\rm in}_\wb (I_{\Cc_P}) = {\rm in}_<(I_{\Cc_P}) $.
Then we define the weight vector $\wb_P$ on $R_P$ such that
the weight of each variable $x_{i_1,\dots,i_k}^\varepsilon$
with respect to $\wb_P$ is the weight of the variable $x_{i_1,\dots,i_k}^{(1,\dots,1)}$ with respect to $\wb$.
In addition, let $\wb_{\rm card}$ be
the weight vector on $R_P$ such that
the weight of each variable $x_{i_1,\dots,i_k}^\varepsilon$
with respect to $\wb_{\rm card}$ is $k$.
Fix any monomial order $\prec$ on $R_P$ as a tie-breaker.
Let $<_P$ be a monomial order on $R_P$ such that $u <_P v$
if and only if one of the following holds:
\begin{itemize}
\item
The weight of $u$ is less than that of $v$ with respect to $\wb_{\rm card}$;
\item
The weight of $u$ is the same as that of $v$ with respect to $\wb_{\rm card}$,
and the weight of $u$ is less than that of $v$ with respect to $\wb_P$;
\item
The weight of $u$ is the same as that of $v$ with respect to $\wb_{\rm card}$
and $\wb_P$, and $u \prec v$.
\end{itemize}

\begin{Theorem}
	\label{thm:grobner}
Work with the same notation as above.
Let $\Gc$ be the set of all binomials
\begin{equation}
\label{saisyo}
x_{i_1,\dots,i_k}^{(\varepsilon_1,\dots, \varepsilon_k)} x_{j_1,\dots, j_\ell}^{(\mu_1,\dots, \mu_\ell)}
-
x_{i_1,\dots, i_{p-1}, i_{p+1}, \dots,i_k}^{(\varepsilon_1,\dots, \varepsilon_{p-1}, \varepsilon_{p+1}, \dots,\varepsilon_k)} x_{j_1,\dots,j_{q-1}, j_{q+1},\dots, j_\ell}^{(\mu_1,\dots, \mu_{q-1}, \mu_{q+1}, \dots, \mu_\ell)},
\end{equation}
where $i_p = j_q$ and $\varepsilon_p \neq \mu_q$,
\begin{equation}
\label{niban}
x_{i_1,\dots,i_k}^{(\varepsilon_1,\dots, \varepsilon_k)} x_{i_{k+1},\dots,i_{k+\ell}}^{(\varepsilon_{k+1},\dots, \varepsilon_{k+\ell})}
-
x_{j_1,\dots,j_{k'}}^{(\mu_1,\dots, \mu_{k'})} 
x_{j_{k'+1},\dots,j_{k'+\ell'}}^{(\mu_{k'+1},\dots, \mu_{k'+\ell'})}
\ (\neq 0),
\end{equation}
where
\begin{itemize}
\item[{\rm (a)}]
For any $p,q$ such that $i_p = i_q$,
we have $\varepsilon_p =\varepsilon_q${\rm ;}
\item[{\rm (b)}]
For any $p,q$ such that $i_p = j_q$,
we have $\varepsilon_p =\mu_q${\rm ;}
\item[{\rm (c)}]
For some $I, J \in \Jc(P)$, we have
$\max (I)= \{i_1,\dots,i_k\}$, 
$\max (J)= \{i_{k+1},\dots,i_{k+\ell}\}$,
$\max (I \cup J) =  \{j_1,\dots,j_{k'}\}$,
$\max (I*J) =  \{j_{k'+1},\dots,j_{k'+\ell'}\}$.
\end{itemize}
Then $\Gc$ is a Gr\"obner basis of $I_{\Ec_P}$ with respect to a monomial order $<_P$.
The initial monomial of each binomial is the first monomial.
In particular, the initial ideal is generated by squarefree quadratic monomials
which do not contain the variable $x_\emptyset$.
\end{Theorem}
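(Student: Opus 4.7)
The plan is to verify $\Gc \subseteq I_{\Ec_P}$, identify the leading monomial of each binomial as claimed, and then show that distinct $\Gc$-standard monomials have distinct images under $\pi$. Since $\Gc \subseteq I_{\Ec_P}$ and $K[\Ec_P]$ is a domain, this last injectivity is equivalent to $\Gc$ being a Gr\"obner basis. For binomials of type~\eqref{saisyo}, the condition $i_p = j_q$ with $\varepsilon_p = -\mu_q$ makes $\varepsilon_p \eb_{i_p} + \mu_q \eb_{j_q} = {\bf 0}$, so deleting these two coordinates from the two antichains preserves the image under $\pi$; the left monomial has $\wb_{\mathrm{card}}$-weight $k+\ell$, strictly greater than the right's $k+\ell-2$. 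For binomials of type~\eqref{niban}, condition~(c) is the Hibi--Li straightening identity, yielding the multiset equality $\max(I) \sqcup \max(J) = \max(I \cup J) \sqcup \max(I*J)$ of elements of $P$; conditions~(a)--(b) promote it to an equality of signed exponents, since every element of the union inherits a well-defined sign across all four antichains. The two sides share $\wb_{\mathrm{card}}$-weight by this multiset identity, but the left monomial has strictly larger $\wb_P$-weight because $\wb_P$ mirrors the Hibi--Li weight $\wb$, for which $x_{\max(I)}^{(1,\ldots,1)} x_{\max(J)}^{(1,\ldots,1)}$ is the leading monomial in the Hibi--Li Gr\"obner basis of $I_{\Cc_P}$.

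Next I would characterize the standard monomials $u = \prod_{j=1}^r x_{A_j}^{\varepsilon^{(j)}}$, i.e., those divisible by no leading term of $\Gc$. Avoiding \eqref{saisyo}-leading terms forces the signs to cohere globally, giving a single function $\varepsilon \colon \bigcup_j A_j \to \{\pm 1\}$ with $\varepsilon^{(j)}_a = \varepsilon(a)$ for every $a \in A_j$. In the presence of this coherence, avoiding \eqref{niban}-leading terms says that for every pair of factors the generated poset ideals $\langle A_j\rangle, \langle A_k\rangle \in \Jc(P)$ are comparable, so after reordering $\langle A_1\rangle \subseteq \cdots \subseteq \langle A_r\rangle$ forms a multichain in $\Jc(P)$, which is precisely the Hibi--Li standard-monomial condition for $I_{\Cc_P}$. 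Thus each standard monomial is encoded by a pair (multichain in $\Jc(P)$, sign function on its support).

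To finish I would compute $\pi(u) = s^r \prod_{i=1}^n t_i^{v_i}$ with $v_i = \varepsilon(i) \cdot |\{j : i \in A_j\}|$ when $i \in \bigcup_j A_j$ and $v_i = 0$ otherwise, and argue that $\pi(u)$ determines $u$: the $s$-exponent recovers $r$, $\operatorname{sign}(v_i)$ recovers $\varepsilon(i)$ whenever $v_i \neq 0$, and the vector $(|v_i|)_{i \in [n]}$ is the lattice point of $r\Cc_P \cap \ZZ^n$ from which the multichain $\langle A_1\rangle \subseteq \cdots \subseteq \langle A_r\rangle$ is uniquely recovered via Hibi--Li's Gr\"obner basis for $I_{\Cc_P}$ (equivalently, via the resulting bijection between length-$r$ multichains in $\Jc(P)$ and lattice points of $r\Cc_P$). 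The main obstacle is exactly this recovery step: the sign and absolute-value data must be cleanly decoupled so that the Hibi--Li uniqueness applies verbatim to $(|v_i|)_{i \in [n]}$, which in turn requires careful handling of elements that appear in several of the $A_j$ with the same sign. Once this is done, the squarefree-quadratic claim and the absence of $x_\emptyset$ from the initial ideal are immediate, since every leading monomial of $\Gc$ is a product of two variables, each indexed by a nonempty antichain.
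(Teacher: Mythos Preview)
Your proposal is correct and follows essentially the same strategy as the paper: verify $\Gc\subset I_{\Ec_P}$, pin down the leading terms via $\wb_{\rm card}$ and $\wb_P$, characterize the $\Gc$-standard monomials as multichains in $\Jc(P)$ equipped with a coherent sign function, and then show that $\pi$ is injective on these. The paper phrases the last step as a contradiction (an irreducible binomial $u-v\in I_{\Ec_P}$ with both sides standard would force $x_{\max(I_r)}^{\varepsilon}=x_{\max(J_r)}^{\mu}$, since $I_r$ is the poset ideal generated by the support of $\pi(u)$), whereas you reduce to Hibi--Li by passing to $(|v_i|)_i$; the two arguments are interchangeable, and your decoupling of sign and absolute value is not a genuine obstacle once coherence is established.
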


\begin{proof}
It is easy to see that any binomial of type (\ref{saisyo}) belongs to $I_{\Ec_P}$.
In analogy to \cite{HibiLi}, any binomial of type (\ref{niban}) belongs to $I_{\Ec_P}$.
Hence $\Gc$ is a subset of $I_{\Ec_P}$.
For a binomial $u-v$ of type (\ref{niban}), let $\pi(u) = t_1^{a_1} \dots t_n^{a_n} s^2$ and $\pi(v) = t_1^{b_1} \dots t_n^{b_n} s^2$.
Since $u-v$ satisfies conditions (a) and (b),
we have $k+\ell = \sum_{i=1}^n |a_i| =  \sum_{i=1}^n |b_i|   =k'+\ell'$.
Hence the weight of $u$ and $v$ are the same with respect to $\wb_{\rm card}$.
Thus the initial monomial of each binomial is the first monomial.
Assume that $\Gc$ is not a Gr\"obner basis of $I_{\Ec_P}$
 with respect to $<_P$.
Let ${\rm in}(\Gc) = \left< {\rm in}_{<_P} (g)  : g \in \Gc \right>$.
Then there exists a non-zero irreducible homogeneous binomial $f = u-v \in I_{\Ec_P}$
such that neither $u$ nor $v$ belongs to ${\rm in}(\Gc)$.
Since both $u$ and $v$ cannot be divided by
$x_{i_1,\dots,i_k}^{(\varepsilon_1,\dots, \varepsilon_k)} x_{j_1,\dots, j_\ell}^{(\mu_1,\dots, \mu_\ell)}$ with $i_p = j_q$ and $\varepsilon_p \neq \mu_q$ for some $p$, $q$, they are of the form
$$
u= 
x_{i_1,\dots,i_{k_1}}^{(\varepsilon_1,\dots, \varepsilon_{k_1})}
x_{i_{k_1+1},\dots,i_{k_2}}^{(\varepsilon_{k_1+1},\dots, \varepsilon_{k_2})}
\dots
x_{i_{k_{r-1}+1},\dots,i_{k_r}}^{(\varepsilon_{i_{k_{r-1}+1}},\dots, \varepsilon_{k_r})},
\ 
v= x_{j_1,\dots,j_{\ell_1}}^{(\mu_1,\dots, \mu_{\ell_1})}
x_{j_{\ell_1+1},\dots,j_{\ell_2}}^{(\mu_{\ell_1+1},\dots, \mu_{\ell_2})}
\dots
x_{j_{\ell_{r-1}+1},\dots,j_{\ell_r}}^{(\mu_{j_{\ell_{r-1}+1}},\dots, \mu_{\ell_r})},
$$
where 
\begin{itemize}
\item[(a)]
For any $p,q$ such that $i_p = i_q$,
we have $\varepsilon_p =\varepsilon_q$;
\item[(b)]
For any $p,q$ such that $j_p = j_q$,
we have $\mu_p =\mu_q$.
\end{itemize}
Moreover since $u$ and $v$ cannot be divided by
the initial monomial of a binomial in (\ref{niban}), 
we may assume that, for some $I_1, \dots , I_r, J_1,\dots,J_r \in \Jc(P)$,
we have
$$
u= 
x_{\max(I_1)}^{(\varepsilon_1,\dots, \varepsilon_{k_1})}
x_{\max(I_2)}^{(\varepsilon_{k_1+1},\dots, \varepsilon_{k_2})}
\dots
x_{\max(I_r)}^{(\varepsilon_{i_{k_{r-1}+1}},\dots, \varepsilon_{k_r})},
\ 
v= x_{\max(J_1)}^{(\mu_1,\dots, \mu_{\ell_1})}
x_{\max(J_2)}^{(\mu_{\ell_1+1},\dots, \mu_{\ell_2})}
\dots
x_{\max(J_r)}^{(\mu_{j_{\ell_{r-1}+1}},\dots, \mu_{\ell_r})},
$$
with $I_1 \subset \dots \subset I_r$ and 
$J_1 \subset \dots \subset J_r$.
Since $u$ and $v$ satisfy conditions (a) and (b)
and since $f$ belongs to $I_{\Ec_P}$,
it then follows that 
$x_{\max(I_r)}^{(\varepsilon_{i_{k_{r-1}+1}},\dots, \varepsilon_{k_r})}
=
x_{\max(J_r)}^{(\mu_{j_{\ell_{r-1}+1}},\dots, \mu_{\ell_r})}$.
This contradicts the assumption that $f$ is irreducible.
\end{proof}

By the correspondence 
\cite[Chapter 8]{sturmfels1996}
between a squarefree quadratic initial ideal of $I_{\Ec_P}$
and a flag regular unimodular triangulation of $\Ec_P$, 
Theorem~\ref{thm:flag} follows from Lemma~\ref{genten_yowaku} and 
Theorem~\ref{thm:grobner}.


\section{$\gamma$-positivity and real-rootedness of 
the $h^*$-polynomial of $\Ec_P$}
\label{sec:enriched}

In this section, we discuss the Ehrhart polynomial and $h^*$-polynomial of $\Ec_P$ of a finite poset $P=[n]$.
In particular, we prove Theorems \ref{ehrhartpolynomial} and \ref{maintheorem}.

Let $(P, \omega)$ be a poset with $n$ elements
and let $\Omega'_m =\{1,-1,2,-2,\dots, m ,-m\}$ for $0< m \in \ZZ$. 
A map $f: P \rightarrow \Omega'_m$ is called an 
{\em enriched $(P,\omega)$-partition} (\cite{StembridgeEnriched})
if, for all $x, y \in P$ with $x <_P y$, $f$ satisfies
\begin{itemize}
\item
$|f(x)| \le |f(y)|$;
\item
$|f(x)| = |f(y)| \ \Rightarrow \ f(x) \le f(y)$;
\item
$f(x) = f(y) >0 \ \Rightarrow \ \omega(x) < \omega(y)$;
\item
$f(x) = f(y) < 0  \ \Rightarrow \ \omega(x) > \omega(y)$.
\end{itemize}
In the present paper, we always assume that $(P, \omega)$ is {\em naturally labeled}.
Then the above condition is equivalent to the following conditions:
\begin{itemize}
\item
$|f(x)| \le |f(y)|$;
\item
$|f(x)| = |f(y)|  \ \Rightarrow \ f(y) > 0$.
\end{itemize}
For each $0 < m \in \ZZ$, let $\Omega'_{P}(m)$ denote
the number of  enriched $(P,\omega)$-partitions $f: P \rightarrow \Omega'_m$.
Then $\Omega'_{P}(m)$ is a polynomial in $m$ called the {\em  enriched order polynomial} of $P$.

On the other hand,
Petersen \cite{Petersen} introduced slightly different notion ``left enriched $(P,\omega)$-partitions'' as follows.
Let $\Omega_m^{(\ell)} =\{0,1,-1,2,-2,\dots, m ,-m\}$ for $0< m \in \ZZ$. 
A map $f: P \rightarrow \Omega_m^{(\ell)}$ is called a {\em left enriched $(P,\omega)$-partition} if, for all $x, y \in P$ with $x <_P y$, $f$ satisfies
the the following conditions:
\begin{itemize}
\item[(i)]
$|f(x)| \le |f(y)|$;
\item[(ii)]
$|f(x)| = |f(y)| \ \Rightarrow \ f(y) \ge 0$.
\end{itemize}

For each $0 < m \in \ZZ$, let $\Omega_{P}^{(\ell)}(m)$ denote
the number of left enriched $(P,\omega)$-partitions $f: P \rightarrow \Omega_m^{(\ell)}$.
Then $\Omega_{P}^{(\ell)}(m)$ is a polynomial in $m$  called the {\em left enriched order polynomial} of $P$.
We can compute the left enriched order polynomial $\Omega_{P}^{(\ell)}(m)$ of $P$ from the enriched order polynomial 
$\Omega'_{P}(m)$ of $P$.
In fact, it follows that
\[
\Omega_{P}^{(\ell)}(m)=\dfrac{1}{2}(\Omega'_{P}(m+1)-\Omega'_{P}(m)).
\]

Now, we prove Theorem \ref{ehrhartpolynomial}.
\begin{proof}[Proof of Theorem \ref{ehrhartpolynomial}]
It is enough to construct a bijection from $m \Ec_P \cap \ZZ^n $ 
to the set $F(m)$ of all left enriched $(P,\omega)$-partitions $f: P \rightarrow \Omega_{m}^{(\ell)}$.
Let $\varphi: F(m) \rightarrow m \Ec_P \cap \ZZ^n $ be a map defined by
$
\varphi (f) = (x_1, \dots, x_{n}) \in \{0, \pm 1, \dots, \pm m\}^n,
$
where 
$$
x_i = 
\left\{
\begin{array}{cc}
f(i) &  \mbox{ if } i \mbox{ is minimal in } P,\\
\\
\min \{ |f(i)|  - | f(j)| : i \mbox{ covers } j \mbox{ in } P\} & \mbox{ if } 
 i \mbox{ is not minimal in } P \mbox{ and } f(i) \ge 0,\\
\\
- \min \{| f(i)|  - | f(j)| : i \mbox{ covers } j \mbox{ in } P\} & \mbox{ otherwise}
\end{array}
\right.
$$
for each $f \in F(m)$.
(This map arising from the map defined in \cite[Theorem 3.2]{twoposetpolytopes}.)

\bigskip

\noindent
{\bf Claim 1.} ($\varphi$ is well-defined.)
By condition (i) for a left enriched $(P, \omega)$-partition $f$,
we have $(|x_1|, \dots, |x_{n}|) \in m \Cc_P$
by a result of Stanley \cite[Theorem 3.2]{twoposetpolytopes}.
Hence, by Lemma~\ref{keylemma}, 
$\varphi (f)$ belongs to $m \Ec_P \cap \ZZ^n $.

\bigskip

Let $\psi:m \Ec_P \cap \ZZ^n \rightarrow   F(m)$ be a map 
defined by 
$\psi (\xb) : P \rightarrow \Omega_{m}^{(\ell)}$, where 
$$\psi (\xb) (i) =
\left\{
\begin{array}{cl}
\max\{|x_{j_1}| + \dots +  |x_{j_k}|  :  j_1 <_P \dots <_P j_k =i \} & \mbox{ if } x_i \ge 0,\\
\\
-
\max\{ |x_{j_1}| + \dots +  |x_{j_k}|  : j_1 <_P \dots <_P j_k =i \} & \mbox{ otherwise}
\end{array}
\right.
$$
for each $\xb = (x_1, \dots, x_{n}) \in m \Ec_P \cap \ZZ^n$.

\bigskip

\noindent
{\bf Claim 2.} ($\psi$ is well-defined.)
If $\xb = (x_1, \dots, x_{n})$ belongs to $m \Ec_P \cap \ZZ^n$, then
$ (|x_1|, \dots, |x_{n}|)$ belongs to $m \Cc_P \cap \ZZ^n$ by Lemma~\ref{keylemma}.
Since $\psi$ is an extension of a map given in \cite[Proof of Theorem 3.2]{twoposetpolytopes}, $\psi (\xb)$ satisfies condition (i) in the
definition of a left enriched partition.
Suppose that $i <_P j$,
$|\psi (\xb) (i)| = |\psi (\xb) (j)|$ and  $\psi (\xb) (j) < 0$.
Then $x_j <0$.
Since $i <_P j$ and $|x_j| >0$, we have $|\psi (\xb) (i)| < |\psi (\xb) (j)|$,
a contradiction.
Thus $\psi (\xb)$ is a left enriched $(P,\omega)$-partition.

\bigskip

Finally, we show that $\varphi$ is a bijection.
It is enough to show that $\psi$ is the inverse of $\varphi$.
Let $\varphi(f) = \xb= (x_1,\dots, x_{n})$ for $f \in F(m)$.
By conditions (i) and (ii) for $f$ together with
 the definition of $\varphi$ and $\psi$, we have
$$
f(i) \ge 0  \Longleftrightarrow x_i \ge 0 \Longleftrightarrow \psi(\xb)(i) \ge 0.
$$
Thus the map $\psi \circ \varphi: F(m) \rightarrow F(m)$ satisfies
$$
\psi \circ  \varphi (f) (i) \ge 0
\Longleftrightarrow
f( i )\ge 0 \ \ \  (1 \le i \le n) 
$$
for any $f \in F(m)$.
As stated in  \cite[Proof of Theorem 3.2]{twoposetpolytopes}, 
we have 
$| \psi \circ  \varphi (f) (i) | = |  f( i ) |$
 ($1 \le i \le n$) for any $f \in F(m)$.
Thus $\psi \circ  \varphi $ is an identity map.
Conversely, let $f= \psi(\xb)$ for $\xb = (x_1, \dots, x_{n}) \in m \Ec_P \cap \ZZ^n$ and let $\varphi(f)=(y_1, \dots, y_{n})$.
By conditions (i) and (ii) for $f$ together with
 the definition of $\varphi$ and $\psi$, we have
$$
x_i \ge 0 \Longleftrightarrow f(i) \ge 0 \Longleftrightarrow y_i \ge 0.
$$
Thus the map $\varphi \circ \psi : m \Ec_P \cap \ZZ^n \rightarrow  m \Ec_P \cap \ZZ^n$ satisfies
$$x_i \ge 0 \Longleftrightarrow y_i \ge 0 \ \ \ (1 \le i \le n).$$
As stated in \cite[Proof of Theorem 3.2]{twoposetpolytopes}, 
we have 
$| x_i | = | y_i |$ for $1 \le i \le n$.
Thus $\varphi \circ  \psi $ is an identity map.
Therefore $\varphi$ is a bijection, as desired.
\end{proof}

Let  $f= \sum_{i=0}^{n}a_i x^i$ be a polynomial with real coefficients and $a_n \neq 0$.
We now focus on the following properties.
\begin{itemize}
	\item[(RR)] We say that $f$ is {\em real-rooted} if all its roots are real.  
	\item[(LC)] We say that $f$ is {\em log-concave} if $a_i^2 \geq a_{i-1}a_{i+1}$ for all $i$.
	\item[(UN)] We say that $f$ is {\em unimodal} if $a_0 \leq a_1 \leq \cdots \leq a_k \geq \cdots \geq a_n$ for some $k$.
\end{itemize}  
If all its coefficients are nonnegative, then these properties satisfy the implications
\[
{\rm(RR)} \Rightarrow {\rm(LC)} \Rightarrow {\rm(UN)}.
\]
On the other hand, the polynomial $f$ is said to be {\em palindromic} if $f(x)=x^nf(x^{-1})$.
It is {\em $\gamma$-positive} if $f$ is palindromic and there are $\gamma_0,\gamma_1,\ldots,\gamma_{\lfloor n/2\rfloor} \geq 0$ such that $f(x)=\sum_{i \geq 0}
\gamma_i \  x^i (1+x)^{n-2i}$.
The polynomial $\sum_{i \geq 0}\gamma_i \ x^i$ is called {\em $\gamma$-polynomial of $f$}. 
We can see that a $\gamma$-positive polynomial is real-rooted if and only if its $\gamma$-polynomial is real-rooted.
If $f$ is a palindromic and real-rooted, then it is $\gamma$-positive.
Moreover, if $f$ is $\gamma$-positive, then it is unimodal.

In the rest of the present section, we discuss the $\gamma$-positivity and the real-rootedness on the $h^*$-polynomial of $\Ec_P$.
It is known \cite{hibi} that the $h^*$-polynomial of a lattice polytope $\Pc$ with the interior lattice point ${\bf 0}$ is palindromic if and only if $\Pc$ is reflexive. Moreover, if a reflexive polytope $\Pc$ has a regular unimodular triangulation, then the $h^*$-polynomial is unimodal (\cite{BR}).
On the other hand, if a reflexive polytope $\Pc$ has a flag regular unimodular triangulation
such that each maximal simplex 
contains the origin as a vertex, then the $h^*$-polynomial coincides with the $h$-polynomial of a flag triangulation of a sphere.
Hence from Theorem~\ref{thm:grobner}, we can show the following.
\begin{Corollary}
	\label{cor:flag}
		Let $P=[n]$ be a poset.
	Then the $h^*$-polynomial of $\Ec_P$ is palindromic, unimodal,
		and coincides with the $h$-polynomial of a flag triangulation of a sphere.
\end{Corollary}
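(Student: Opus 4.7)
The plan is to combine Theorem~\ref{thm:flag} with three standard facts from Ehrhart theory that are recalled in the paragraph immediately preceding the statement.

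First, by Theorem~\ref{thm:flag}, the polytope $\Ec_P$ is reflexive and admits a flag regular unimodular triangulation $\Delta$ in which every maximal simplex has the origin as a vertex. Since the origin is the unique interior lattice point of $\Ec_P$, Hibi's characterization of reflexive polytopes yields that $h^*(\Ec_P,x)$ is palindromic. Unimodality then follows from the theorem (due to Bruns--Römer, building on Stanley) that the $h^*$-polynomial of any reflexive polytope admitting a regular unimodular triangulation is unimodal.

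For the last and most substantive claim, I would pass from the triangulation $\Delta$ of $\Ec_P$ to its restriction to the boundary $\partial \Ec_P$. Because $\Delta$ is unimodular and every maximal simplex contains $\mathbf{0}$ as a vertex, the antistar (equivalently, the link) of $\mathbf{0}$ in $\Delta$ is a simplicial complex $\Delta_\partial$ whose geometric realization is $\partial \Ec_P$, hence a triangulation of the $(n-1)$-sphere $S^{n-1}$. The flag property of $\Delta$ descends to $\Delta_\partial$ since a minimal non-face of the link of a vertex is a minimal non-face of the whole complex. It remains to match $h^*(\Ec_P,x)$ with the $h$-polynomial of $\Delta_\partial$: the standard identity $h^*(\Ec_P,x) = h(\Delta,x)$ for a unimodular triangulation combined with the fact that every maximal face of $\Delta$ contains $\mathbf{0}$ gives $h(\Delta,x) = h(\Delta_\partial,x)$, the latter being the $h$-polynomial of a flag triangulation of $S^{n-1}$.

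The main obstacle, and the reason I would cite Theorem~\ref{thm:grobner} at this juncture, is verifying that the triangulation produced by the Gröbner-basis correspondence is exactly the one whose maximal simplices all share the vertex $\mathbf{0}$; this is ensured by the fact that the squarefree quadratic initial ideal in Theorem~\ref{thm:grobner} does not involve the variable $x_\emptyset$, so no minimal non-face of $\Delta$ contains $\mathbf{0}$, i.e.\ $\mathbf{0}$ lies in every maximal simplex. With this observation in hand, the three assertions of the corollary follow by assembling the ingredients above.
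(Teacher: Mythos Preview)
Your proposal is correct and follows exactly the approach the paper takes: the corollary is deduced from Theorem~\ref{thm:flag} (which is itself obtained from Theorem~\ref{thm:grobner}) together with Hibi's palindromicity criterion, the Bruns--R\"omer unimodality result, and the observation that restricting the cone-over-the-origin triangulation to $\partial\Ec_P$ yields a flag sphere whose $h$-polynomial equals $h^*(\Ec_P,x)$. The paper states these ingredients in the paragraph preceding the corollary and gives no further argument, so your write-up is simply a more detailed version of the same proof.
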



Given a linear extension $\pi = (\pi_1,\dots,\pi_n)$ of a poset $P=[n]$,
a {\em peak} (resp. {\em a left peak}) of $\pi$ is an index $2 \le i \le n-1$ (resp. $1 \le i \le n-1$) such that 
$\pi_{i-1} <\pi_i > \pi_{i+1} $, where we set $\pi_0 =0$.
Let ${\rm pk}(\pi)$ (resp. ${\rm pk}^{(\ell)}(\pi)$) denote the number of peaks (resp. left peaks) of $\pi$.
Then the {\em peak polynomial} $W_{P} (x)$ and the {\em left peak polynomial} $W_{P}^{(\ell)} (x)$ of $P$ are defined by
$$
W_{P} (x) = 
\sum_{\pi \in {\mathcal L} (P)} x^{\ {\rm pk}(\pi)}
$$
and
$$
W_{P}^{(\ell)} (x) = 
 \sum_{\pi \in {\mathcal L} (P)} x^{\ {\rm pk}^{(\ell)}(\pi)}.
$$
Petersen \cite{Petersen} computed the generating function for a left enriched order polynomial:

\begin{Lemma}[{\cite[Theorem~4.6]{Petersen}}]
	\label{lem:petersen}
	Let $P = [n]$ be a naturally labeled poset. 
	Then we have the following generating function for the left enriched order polynomial of $P${\rm :}
	$$
\sum_{m \geq 0}\Omega^{(\ell)}_P(m) x^m	=  \dfrac{(x+1)^n}{(1-x)^{n+1}} \ W_{P}^{(\ell)}  \left(  \frac{4x}{(x+1)^2} \right).
	$$
\end{Lemma}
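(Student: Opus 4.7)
The plan is to reduce the identity to a sum over linear extensions via the fundamental lemma for left enriched $(P,\omega)$-partitions, and then compute the per-extension contribution in closed form.

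First I would invoke the decomposition (the ``fundamental lemma'' for left enriched partitions): for a naturally labeled poset $P$, the set of all left enriched $(P,\omega)$-partitions $f: P \to \Omega_{m}^{(\ell)}$ splits into cells indexed by the linear extensions $\pi \in \mathcal{L}(P)$. For each $\pi = (\pi_1,\ldots,\pi_n)$, the cell attached to $\pi$ consists of those $f$ for which the sequence $(f(\pi_1),\ldots,f(\pi_n))$ is weakly increasing in absolute value, subject to a sign condition at every equality $|f(\pi_i)|=|f(\pi_{i+1})|$ that is governed by whether $i$ is an ascent, a descent, or a left peak of $\pi$ (with $\pi_0=0$). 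This reduces the identity to a per-permutation statement, so that $\Omega_{P}^{(\ell)}(m)=\sum_{\pi\in\mathcal{L}(P)} L_\pi^{(\ell)}(m)$, where $L_\pi^{(\ell)}(m)$ counts the compatible sequences.

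Next, for a fixed $\pi$, I would compute $L_\pi^{(\ell)}(m)$ by a balls-in-boxes argument, tracking the sign choices as one traverses $\pi_1,\pi_2,\ldots,\pi_n$. The crucial observation is that the number of free sign choices is encoded precisely by the left peaks: at each left peak the sign is forced in one way, but its contribution to the generating function becomes $\frac{4x}{(x+1)^2}$, while the remaining positions contribute multiplicatively through a uniform factor. A direct computation then yields
\[
\sum_{m \geq 0} L_\pi^{(\ell)}(m)\, x^m \;=\; \frac{(x+1)^n}{(1-x)^{n+1}}\left(\frac{4x}{(x+1)^2}\right)^{\mathrm{pk}^{(\ell)}(\pi)}.
\]

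Finally, summing the displayed identity over $\pi \in \mathcal{L}(P)$ and collecting terms according to the value of $\mathrm{pk}^{(\ell)}(\pi)$ produces exactly the left peak polynomial $W_{P}^{(\ell)}$ evaluated at $\frac{4x}{(x+1)^2}$, delivering the stated formula. The main obstacle is the per-extension computation of $L_\pi^{(\ell)}(m)$ in the second step: one must show that the global sign/absolute-value constraints across all of $\pi$ factor through a ``block decomposition'' at the left peaks, and verify the precise algebraic identity that converts a block with an internal left peak into the factor $\frac{4x}{(x+1)^2}$. This is where the alternative formulation of the partition conditions for naturally labeled posets (just $|f(x)|\le|f(y)|$ together with $|f(x)|=|f(y)| \Rightarrow f(y)\ge 0$) is used decisively, since it makes the enriched condition local to consecutive entries of $\pi$ and therefore compatible with the block decomposition.
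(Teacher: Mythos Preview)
The paper does not prove this lemma at all: it is stated as a direct citation of \cite[Theorem~4.6]{Petersen} and is invoked immediately afterwards to deduce Theorem~\ref{maintheorem} from Theorem~\ref{ehrhartpolynomial}. So there is no ``paper's own proof'' to compare against.

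That said, your outline is essentially Petersen's argument. The fundamental lemma for left enriched $(P,\omega)$-partitions does reduce the count to a sum over linear extensions, and the per-extension generating function does depend only on the left-peak statistic in the way you describe. One caution: your description of the cell attached to $\pi$ is a bit loose --- the enriched compatibility conditions are not literally ``a sign condition at every equality governed by whether $i$ is an ascent, descent, or left peak,'' but rather a set of inequalities among the values $f(\pi_i)$ in the total order on $\Omega_m^{(\ell)}$ (with descents forcing strict increases and ascents allowing weak ones, together with the sign rule). The miracle that the resulting generating function depends only on $\mathrm{pk}^{(\ell)}(\pi)$, and not on the full descent set, is the substance of Stembridge's and Petersen's enriched theory, and your sketch glosses over the mechanism by which descents and ascents conspire to leave only the peak count visible. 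If you were to write this out in full, that is the step requiring the most care; the ``direct computation'' you allude to is genuinely where the work lies.
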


Therefore, Theorem \ref{maintheorem} follows from Theorem \ref{ehrhartpolynomial} and Lemma \ref{lem:petersen}.

\begin{Remark}
A poset $P$ is said to be {\em narrow} if 
the vertices of $P$ may be partitioned into two chains.
Stembridge \cite[Proposition 1.1]{Stembridge} essentially pointed out that,
if $P$ is narrow, then $W_{P}^{(\ell)} (x)$ coincides with the {\em $P$-Eulerian polynomial} 
$$W(P)( x)= 
\sum_{\pi \in {\mathcal L} (P)} x^{\ {\rm des}(\pi)},$$
where ${\rm des}(\pi)$ is the number of descents of $\pi$.
Thus, for a narrow poset $P$,
$$h^*(\Ec_P, x) =  (x+1)^n \ W(P) \left(\frac{4x}{(x+1)^2} \right).$$
This fact coincides with the result in \cite{OTinterior} for a bipartite permutation graph.
Note that a naturally labeled narrow poset $P$ such that $ W(P) (x)$ is not real-rooted is given in \cite{Stembridge}.
\end{Remark}

Given a poset $P = [n]$, the {\em comparability graph} $G(P)$ of $P$
is the graph on the vertex set $[n]$ 
with $i, j \in [n]$ adjacent if either $i <_P j$  or $j <_P i$.
Then $\{i_1,\dots,i_k\} \subset [n]$ is an antichain of $P$ if and only if $\{i_1,\dots,i_k\}$ is a stable set (independent set) of $G(P)$.
Hence $\Ec_P = \Ec_{P'}$ if $G(P) = G(P')$ for posets $P$ and $P'$.
Thus we have the following immediately.

\begin{Corollary}
\label{cor:comp}
Both the (left) enriched order polynomial 
of $P$ and  the (left) peak polynomial of $P$
depend only on the comparability graph ${\rm G}(P)$ of $P$.

\end{Corollary}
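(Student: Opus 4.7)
The plan is to chain together the preceding observation $\Ec_P = \Ec_{P'}$ whenever $G(P)=G(P')$ with Theorems \ref{ehrhartpolynomial} and \ref{maintheorem} for the ``left'' statements, and then use the explicit formulas linking the left and non-left versions to handle the remaining two cases.

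First I note that the identification of antichains of $P$ with stable sets of $G(P)$ gives $E(P) = E(P')$, hence $\Ec_P = \Ec_{P'}$, whenever $G(P) = G(P')$; so the Ehrhart polynomial $L_{\Ec_P}(m)$ and the $h^*$-polynomial $h^*(\Ec_P, x)$ depend only on $G(P)$. Applying Theorem \ref{ehrhartpolynomial} immediately yields that the left enriched order polynomial $\Omega_P^{(\ell)}(m) = L_{\Ec_P}(m)$ is a $G(P)$-invariant. Theorem \ref{maintheorem} gives $h^*(\Ec_P, x) = (x+1)^n W_P^{(\ell)}(4x/(x+1)^2) = \sum_i 4^i w_i\, x^i (x+1)^{n-2i}$ where $W_P^{(\ell)}(y) = \sum_i w_i y^i$; since the $\gamma$-expansion of a palindromic polynomial of degree $n$ is unique, the coefficients $w_i$ are recovered from $h^*(\Ec_P, x)$, and hence $W_P^{(\ell)}(x)$ depends only on $G(P)$.

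For the non-left versions, the identity $\Omega_P^{(\ell)}(m) = \tfrac{1}{2}(\Omega_P'(m+1) - \Omega_P'(m))$ displayed earlier in Section \ref{sec:enriched}, together with the initial value $\Omega_P'(0)=0$ for nonempty $P$, telescopes to $\Omega_P'(m) = 2\sum_{k=0}^{m-1}\Omega_P^{(\ell)}(k)$; so $\Omega_P'$ is recovered from $\Omega_P^{(\ell)}$ and therefore depends only on $G(P)$. The analogous generating function of Stembridge \cite{StembridgeEnriched} expresses $\sum_{m\ge 0} \Omega_P'(m) x^m$ as a fixed rational expression in $W_P(4x/(x+1)^2)$, and the same uniqueness-of-$\gamma$-expansion argument recovers $W_P(x)$ from this series. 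Hence $W_P(x)$ also depends only on $G(P)$.

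There is essentially no obstacle: the whole argument is a short deduction from the equality $\Ec_P = \Ec_{P'}$. The only point requiring a small verification is the inversion step---namely, that the operation $W(y) \mapsto (x+1)^n W(4x/(x+1)^2)$ is injective on polynomials of degree at most $\lfloor n/2\rfloor$---but this is precisely the uniqueness of the $\gamma$-expansion of a palindromic polynomial, so no real difficulty arises.
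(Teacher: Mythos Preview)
Your argument is correct and is exactly what the paper intends: the paper's entire proof is the observation ``$\Ec_P = \Ec_{P'}$ if $G(P)=G(P')$ \ldots\ Thus we have the following immediately,'' and you have simply spelled out the deductions through Theorems~\ref{ehrhartpolynomial} and~\ref{maintheorem} (and, for the non-left cases, the displayed identity $\Omega_P^{(\ell)}(m)=\tfrac12(\Omega_P'(m+1)-\Omega_P'(m))$ together with Stembridge's generating function) that the paper leaves implicit.
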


\section{The $\Gamma$-complexes}
\label{sec:complex}
In \cite{NevoPetersen}, Nevo and Petersen conjectured the following.
\begin{Conjecture}[{\cite[Conjecture 1.4]{NevoPetersen}}]
	\label{conj:NP}
	The $\gamma$-polynomial of any flag triangulation of a sphere is the $f$-polynomial of a simplicial complex.
	\end{Conjecture}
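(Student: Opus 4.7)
The plan is to try to attach, to each flag triangulation $\Delta$ of a sphere, an explicit flag simplicial complex $\Gamma(\Delta)$ whose $f$-polynomial realizes the $\gamma$-polynomial of $\Delta$. In view of Theorem \ref{thm:fpoly}, the construction is understood when $\Delta$ arises as the boundary complex of the triangulation associated to $\Ec_P$, so the first step would be to identify combinatorial features of that construction (the role of antichains of $P$, the symmetry $\eb_i \mapsto -\eb_i$, the centrally symmetric orthant decomposition from Lemma \ref{keylemma}) which might survive in a purely simplicial setting.

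Next, I would attempt a reduction via stellar/edge subdivisions and a dimension induction. Any flag sphere $\Delta$ can be probed by choosing a vertex $v$ and looking at $\mathrm{lk}_\Delta(v)$, which is again a flag sphere of one lower dimension. Gal's local decomposition expresses $\gamma(\Delta,x)$ as a sum of contributions indexed by vertices (or more refined local data), and by induction each link carries its own complex $\Gamma(\mathrm{lk}_\Delta(v))$. The task would then be to glue these pieces, along with a boundary correction coming from the antistar, into a single flag complex $\Gamma(\Delta)$; the flagness would have to be verified via a no-induced-bad-cycle condition transferred from the flagness of $\Delta$. An alternative route is to try to realize $\Gamma(\Delta)$ as the $\mathrm{Tor}$-support (or as a monomial initial complex) of a Koszul algebra whose bigraded Hilbert series forces the Kruskal--Katona shape.

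The main obstacle is that Conjecture \ref{conj:NP} is open in full generality; the existing confirmations (Coxeter complexes, chordal nestohedra, and the enriched chain polytopes of the present paper) all rely on a rich extra structure producing $\Gamma(\Delta)$ more or less by hand, and there is no known uniform recipe that works on an arbitrary flag sphere. Worse, even Gal's weaker $\gamma$-nonnegativity conjecture is unresolved, so any serious attempt at \ref{conj:NP} must first bypass that gap. My expectation is that the plan above cannot be pushed to completion without a new structural idea: either a canonical flag complex attached to an arbitrary flag sphere whose $f$-vector can be computed face-by-face, or an algebraic model (for instance a Koszul quotient of a polynomial ring whose Hilbert function tracks the $\gamma$-vector) that forces the Kruskal--Katona inequalities without producing the complex explicitly. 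Absent such an idea, the proposal realistically yields only a framework into which further partial cases (beyond enriched chain polytopes) can be fit, rather than a proof of the full conjecture.
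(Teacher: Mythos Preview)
The statement you were asked to prove is labeled a \emph{Conjecture} in the paper, and indeed the paper does not prove it; it merely records it as the Nevo--Petersen conjecture and then establishes the special case where the flag sphere arises from an enriched chain polytope (Theorem~\ref{thm:fpoly}). So there is no ``paper's own proof'' to compare against, and your proposal is correctly calibrated when it concludes that the full statement is out of reach.

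That said, your write-up is not a proof attempt but a research outline, and it is honest about that. The inductive/link-based strategy you sketch is reasonable as a heuristic, but you already identify the fatal gap yourself: even the weaker $\gamma$-nonnegativity (Gal's conjecture) is open for arbitrary flag spheres, so any scheme that would produce a simplicial complex realizing the $\gamma$-vector must in particular establish nonnegativity, and no known local decomposition of $\gamma(\Delta,x)$ over links does this. The gluing step you propose (assembling $\Gamma(\mathrm{lk}_\Delta(v))$'s into a global $\Gamma(\Delta)$) has no candidate mechanism; the successful cases in the literature, including Theorem~\ref{thm:fpoly} here, work because an external combinatorial object (permutations, linear extensions, nested sets) supplies $\Gamma$ directly, not because it is built from the sphere. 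Your final paragraph states this accurately, so the proposal should be read as a correct assessment that Conjecture~\ref{conj:NP} remains open, rather than as a proof.
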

Equivalently, the coefficients of the $\gamma$-polynomial satisfy the Kruskal--Katona inequalities. (See \cite[Chapter II.2]{Stanleygreenbook}.)
Clearly,  Conjecture \ref{conj:NP} is stronger than Gal's Conjecture.
Moreover, they gave the following problem.
\begin{Problem}[{\cite[Problem 6.4]{NevoPetersen}}]
	\label{prob:NP}
	The $\gamma$-polynomial of any flag triangulation of a sphere is the $f$-polynomial of a flag simplicial complex.
\end{Problem}
In this section, we solve this problem  for enriched chain polytopes.

Let $P=[n]$ be an antichain.
Then $h^*(\Ec_P, x) $ coincides with the Eulerian polynomial $B_n(x)$
of type B.
See \cite[Proposition 4.15]{Petersen}.
In this case, a flag simplicial complex $\Gamma({\rm Dec}_n)$
whose $f$-polynomial is the $\gamma$-polynomial
of $h^*(\Ec_P, x) $ is given in \cite[Corollary 4.5 (2)]{NevoPetersen}
as follows.
A {\em decorated permutation} $\wb$ is a permutation $w \in {\mathfrak S}_n$ with
bars colored in four colors $|^0$, $|^1$, $|^2$, and $|^3$ following 
the left peak positions.
Let ${\rm Dec}_n$ be the set of all decorated permutations.
Given $w \in {\mathfrak S}_n$, there exist $4^{{\rm pk}^{(\ell)}(w)}$
decorated permutations associated with $w$ in ${\rm Dec}_n$.
For example, $3|^2 24|^1 157|^0 689$ belongs to ${\rm Dec}_9$.
Given 
\begin{equation}
\label{deco}
\wb=w_1 |^{c_1} \dots |^{c_{i-1}} w_i |^{c_i} w_{i+1} |^{c_{i+1}} \dots |^{c_{\ell-1}}w_\ell \in {\rm Dec}_n,
\end{equation}
let $w_i = \grave{w}_i \acute{w}_i$ where $\grave{w}_i$ is the decreasing part of $w_i$
and $\acute{w}_i$ the increasing part of $w_i$.
We say that $\wb \in {\rm Dec}_n$ covers $\ub \in {\rm Dec}_n$
if and only if $\ub$ is obtained from $\wb$ by removing a colored bar $|^{c_i}$ and
reordering the word $w_i w_{i+1} = \grave{w}_i \acute{w}_i w_{i+1}$ as a word $\grave{w}_i \acute{a}$
where $\acute{a} = {\rm sort}( \acute{w}_i w_{i+1} )$.
Then $({\rm Dec}_n, \le)$ is a poset graded by number of bars.

We associate the set ${\rm Dec}_n$ with the flag simplicial complex $\Gamma({\rm Dec}_n)$ on the vertex set $V=\{\wb \in {\rm Dec}_n : {\rm pk}^{(\ell)}(w) = 1\}$.
In $\Gamma({\rm Dec}_n)$, two vertices
$\ub = \acute{u}_1 |^c \grave{u}_2 \acute{u}_2$ and 
$\vb = \acute{v}_1 |^d \grave{v}_2 \acute{v}_2$ with
$|\acute{u}_1| < | \acute{v}_1 |$ are adjacent if and only if
$\wb = \acute{u}_1 |^c \grave{u}_2 \acute{a}  |^d \grave{v}_2 \acute{v}_2$
belongs to ${\rm Dec}_n$,
where $\acute{a} = {\rm sort}(\acute{u}_2 \cap \acute{v}_1 )$.
Then $\Gamma({\rm Dec}_n)$ is the collection of all subsets $F$ of $V$ such that
every two distinct vertices in $F$ are adjacent.
By definition, $\Gamma({\rm Dec}_n)$ is a flag simplicial complex.
Let $\phi: {\rm Dec}_n \rightarrow \Gamma({\rm Dec}_n)$
be a map defined by 
$$\phi(\wb) = \{ 
w_1 |^{c_1} \grave{w}_2 \acute{b}_1 ,
\dots, 
\acute{a}_i |^{c_i} \grave{w}_{i+1} \acute{b}_i,
\dots, 
\acute{a}_{\ell-1} |^{c_{\ell-1}} \grave{w}_{\ell} \acute{b}_{\ell-1}
\}$$
for 
$\wb=w_1 |^{c_1} \dots |^{c_{i-1}} w_i |^{c_i} w_{i+1} |^{c_{i+1}} \dots |^{c_{\ell-1}}w_\ell$,
where 
$\acute{a}_i$ is the set of letters to the left of $\grave{w}_{i+1}$ in $\wb$ written in increasing order
and $\acute{b}_i$ is the set of letters to the right of $\grave{w}_{i+1}$ in $\wb$ written in increasing order.
It was shown \cite{NevoPetersen} that $\phi$ is an isomorphism of graded posets
from $({\rm Dec}_n, \le)$ to $(\Gamma({\rm Dec}_n ) , \subseteq)$.
Thus we have the following (\cite[Corollary 4.5 (2)]{NevoPetersen}).

\begin{Proposition}
Let $P=[n]$ be an antichain.
Then the $\gamma$-polynomial of $h^*(\Ec_P, x) $ is the $f$-polynomial of the 
flag simplicial complex $\Gamma({\rm Dec}_n)$.
\end{Proposition}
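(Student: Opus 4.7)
The plan is to combine two ingredients: an explicit identification of $h^*(\Ec_P,x)$ with the type B Eulerian polynomial $B_n(x)$ when $P$ is an antichain, and Nevo-Petersen's Corollary 4.5(2), which realizes the $\gamma$-polynomial of $B_n(x)$ as the $f$-polynomial of the flag simplicial complex $\Gamma({\rm Dec}_n)$ constructed above.

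First I would check the antichain case directly. Since every subset of an antichain is itself an antichain, $E(P)$ consists of every $\{-1,0,1\}$-vector of length $n$, and hence $\Ec_P = [-1,1]^n$. Its Ehrhart polynomial equals $(2m+1)^n$, matching $\Omega_P^{(\ell)}(m)$ via Theorem~\ref{ehrhartpolynomial}, and the $h^*$-polynomial of the cube $[-1,1]^n$ is the type B Eulerian polynomial $B_n(x)$. Equivalently, one can plug $W_{[n]}^{(\ell)}(x) = \sum_{\pi \in \mathfrak{S}_n} x^{\,{\rm pk}^{(\ell)}(\pi)}$ into Theorem~\ref{maintheorem} and recover the classical $\gamma$-expansion of $B_n(x)$, in which the coefficient of $x^i$ is $4^i \cdot |\{\pi \in \mathfrak{S}_n : {\rm pk}^{(\ell)}(\pi) = i\}|$.

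Next I would invoke the construction recalled just before the proposition. The map $\phi: {\rm Dec}_n \to \Gamma({\rm Dec}_n)$ is an isomorphism of graded posets, ranked by number of bars on the source and by cardinality on the target, so the number of $(i-1)$-faces of $\Gamma({\rm Dec}_n)$ equals the number of decorated permutations in ${\rm Dec}_n$ with exactly $i$ bars, namely $4^i \cdot |\{\pi \in \mathfrak{S}_n : {\rm pk}^{(\ell)}(\pi) = i\}|$. This matches the coefficient of $x^i$ in the $\gamma$-polynomial computed above, and flagness of $\Gamma({\rm Dec}_n)$ is built into its definition.

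The main obstacle, already resolved in the Nevo-Petersen paper, is verifying that $\phi$ is well-defined and bijective, i.e., that the cliques of the graph underlying $\Gamma({\rm Dec}_n)$ are exactly the images $\phi(\wb)$ of the decorated permutations. Accepting this as the cited input, the proof reduces to the bookkeeping outlined above.
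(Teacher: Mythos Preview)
Your proposal is correct and follows essentially the same route as the paper: the paper observes (citing \cite[Proposition~4.15]{Petersen}) that for an antichain $P$ one has $h^*(\Ec_P,x)=B_n(x)$, and then invokes \cite[Corollary~4.5(2)]{NevoPetersen} together with the graded-poset isomorphism $\phi$ to conclude. You simply spell out the underlying bookkeeping (the identification $\Ec_P=[-1,1]^n$, the $\gamma$-coefficients $4^i\,|\{\pi:\mathrm{pk}^{(\ell)}(\pi)=i\}|$ via Theorem~\ref{maintheorem}, and the face count via $\phi$) that the paper leaves implicit in those citations.
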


Given a poset $P=[n]$, 
let $S_P = \{\wb \in {\rm Dec}_n : w \in {\mathcal L}(P)\}$.
%
Then we have the following.

\begin{Theorem}
	\label{thm:fpoly}
Let $P=[n]$ be a poset.
Then the image $\Gamma(S_P) := \phi(S_P)$ is a flag simplicial subcomplex 
of $\Gamma({\rm Dec}_n)$ whose $f$-polynomial is
the $\gamma$-polynomial of $h^*(\Ec_P, x) $.
\end{Theorem}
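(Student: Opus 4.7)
The plan is to verify the statement in three stages: (a) $\Gamma(S_P)$ is closed under taking subfaces, (b) $\Gamma(S_P)$ is flag, and (c) its $f$-polynomial matches the $\gamma$-polynomial of $h^*(\Ec_P, x)$.

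For (a), since $\phi$ is an isomorphism of graded posets from $({\rm Dec}_n, \le)$ to $(\Gamma({\rm Dec}_n), \subseteq)$, I would show that $S_P$ is downward closed. A single covering step replaces the block $w_i w_{i+1}$ by $\grave{w}_i \acute{a}$ with $\acute{a} = {\rm sort}(\acute{w}_i w_{i+1})$, fixing the positions of all letters outside this block and of those in $\grave{w}_i$. Any pair $p <_P q$ with both letters in $\acute{w}_i w_{i+1}$ satisfies $p < q$ by natural labeling, hence $p$ precedes $q$ in the sorted $\acute{a}$; the other $P$-comparabilities are unaffected. Thus $w \in {\mathcal L}(P)$ implies the new underlying permutation is also in ${\mathcal L}(P)$.

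For (b), I would exhibit $\Gamma(S_P)$ as the induced subcomplex of $\Gamma({\rm Dec}_n)$ on its own vertex set, which is flag because induced subcomplexes of flag complexes are flag. Via $\phi$, the required statement is: if every vertex $\ub_i = \acute{a}_i |^{c_i} \grave{w}_{i+1} \acute{b}_i$ of $\phi(\wb)$ lies in $S_P$, then $\wb \in S_P$. Argue contrapositively: suppose the underlying $w$ of $\wb$ violates the linear-extension condition via some $a <_P b$ with $b$ preceding $a$, and locate $a, b$ in the decomposition $w = \grave{w}_1 \acute{w}_1 \cdots \grave{w}_\ell \acute{w}_\ell$ (noting $\grave{w}_1 = \emptyset$). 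If $b \in w_j$ and $a \in w_k$ with $j < k$, take $i = j$: then $b$ sits in the left sorted block $\acute{a}_j$ of $\ub_j$, while $a$ lies in $\grave{w}_{j+1}$ or in $\acute{b}_j$. If $a, b$ lie in the same block $w_j$, then necessarily $b \in \grave{w}_j$ (so $j \ge 2$), and taking $i = j - 1$ places $b$ in the middle block $\grave{w}_j$ of $\ub_{j-1}$, with $a$ either later in $\grave{w}_j$ or in $\acute{b}_{j-1}$. In every case $b$ still precedes $a$ in the underlying of $\ub_i$, contradicting $\ub_i \in S_P$.

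For (c), the grading isomorphism $\phi$ puts faces of $\Gamma(S_P)$ of cardinality $i$ in bijection with pairs $(w, c)$, where $w \in {\mathcal L}(P)$ satisfies ${\rm pk}^{(\ell)}(w) = i$ and $c \in \{0,1,2,3\}^i$ is a coloring of the bars. Summing gives
\[
f(\Gamma(S_P), x) = \sum_{w \in {\mathcal L}(P)} (4x)^{{\rm pk}^{(\ell)}(w)} = W_P^{(\ell)}(4x),
\]
and expanding $h^*(\Ec_P, x) = (x+1)^n W_P^{(\ell)}(4x/(x+1)^2)$ from Theorem \ref{maintheorem} in $\gamma$-positive form yields $\gamma_i = 4^i [x^i] W_P^{(\ell)}$, so the $\gamma$-polynomial also equals $W_P^{(\ell)}(4x)$. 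The main obstacle is step (b): one must carefully parse $\wb$'s block structure so that any linear-extension violation in $w$ can be exhibited inside some one-bar sub-decoration $\ub_i$, using crucially that $\grave{w}_1 = \emptyset$. Steps (a) and (c) become routine once the interplay between natural labeling and the sort operation is noted.
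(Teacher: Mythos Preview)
Your proposal is correct. Parts (a) and (c) match the paper's argument essentially verbatim: the paper also shows $(S_P,\le)$ is a lower ideal in $({\rm Dec}_n,\le)$ by observing that sorting a consecutive block of a linear extension yields a linear extension, and also reads off the $f$-polynomial as $W_P^{(\ell)}(4x)$ via Theorem~\ref{maintheorem}.

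The only genuine divergence is in part (b). The paper proves flagness by a forward induction: given pairwise adjacent $\ub_1,\dots,\ub_\ell\in V_P$ (ordered by bar position), it assumes $\phi^{-1}(\{\ub_1,\dots,\ub_{\ell-1}\})=\wb\in S_P$ and then checks that adjoining $\ub_\ell$ produces $\wb'\in S_P$, using the containments $\acute{a}\cup\grave{u}_{\ell,2}\cup\acute{u}_{\ell,2}\subset\acute{w}_\ell$ and $\acute{u}_{\ell-1,1}\cup\grave{w}_\ell\cup\acute{a}\subset\acute{u}_{\ell,1}$ to certify every cross-block $P$-comparability. You instead argue contrapositively that any violation $a<_P b$ with $b$ left of $a$ in $w$ survives into some single vertex $\ub_i$ of $\phi(\wb)$, by a short case split on whether $a,b$ lie in distinct blocks or the same block (the latter forcing $b\in\grave{w}_j$ and hence $j\ge 2$ since $\grave{w}_1=\emptyset$). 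Both routes establish that $\Gamma(S_P)$ is the induced subcomplex of $\Gamma({\rm Dec}_n)$ on $V_P$; the paper's induction is more constructive and makes the face $\phi^{-1}(F)$ explicit, while your contrapositive localizes the obstruction and avoids tracking the full inductive bookkeeping. Note that, like the paper's proof, your argument tacitly uses natural labeling (in (a) when sorting, and in (b) when inferring $a<b$ from $a<_P b$); this is harmless since the statement reduces to that case.
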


\begin{proof}
First, we show that $\Gamma(S_P)$ is a subcomplex of $\Gamma({\rm Dec}_n)$.
Let $\wb$ of the form (\ref{deco}) be an element of $S_P$.
If $\ub$ is obtained from $\wb$ by removing a colored bar $|^{c_i}$ and
reordering the word $w_i w_{i+1} = \grave{w}_i \acute{w}_i w_{i+1}$ as a word $\grave{w}_i \acute{a}$
where $\acute{a} = {\rm sort}( \acute{w}_i w_{i+1} )$, then $u \in {\mathfrak S}_n$ is obtained from $w \in {\mathcal L}(P)$
by sorting a consecutive part of $w$.
Then $u \in {\mathcal L}(P)$, and hence $\ub \in S_P$.
Thus $(S_P, \le)$ is a lower ideal in $({\rm Dec}_n, \le)$.
Since $\phi$ is an isomorphism of graded posets, it follows that
$\Gamma(S_P)$ is a subcomplex of $\Gamma({\rm Dec}_n)$.

Second, we show that $\Gamma(S_P)$ is flag.
Let $V_P =\{\wb \in S_P :{\rm pk}^{(\ell)}(w) = 1\}$.
Since $(S_P, \le)$ is a lower ideal, $\phi(\wb) \subset V_P$
if $\wb \in S_P$.
Let $F=\{\ub_1,\dots, \ub_\ell\}$ be pairwise adjacent vertices in $V_P$
ordered by increasing position of the bar in $\ub_i$.
We show that $\phi^{-1} (F)$ belongs to $S_P$.
If $\ell=1$, then it is trivial.
Suppose by induction on $\ell$ that 
$$
\phi^{-1} (\{\ub_1,\dots, \ub_{\ell-1}\}) =
\wb=w_1 |^{c_1} \dots |^{c_{i-1}} w_i |^{c_i} w_{i+1} |^{c_{i+1}} \dots |^{c_{\ell-1}}\grave{w}_\ell \acute{w}_\ell \in S_P
$$
with $\ub_{\ell-1} = \acute{u}_{\ell-1,1}  |^{c_{\ell-1}}\grave{w}_\ell \acute{w}_\ell$.
Then $\phi^{-1} (F)$ is 
$$
\wb'= w_1 |^{c_1} \dots |^{c_{i-1}} w_i |^{c_i} w_{i+1} |^{c_{i+1}} \dots |^{c_{\ell-1}}\grave{w}_\ell \acute{a} |^{c_\ell} \grave{u}_{\ell,2} \acute{u}_{\ell,2},
$$
where $\ub_\ell = \acute{u}_{\ell,1} |^{c_\ell} \grave{u}_{\ell,2} \acute{u}_{\ell,2}$
and
$\acute{a} = {\rm sort} (\acute{w}_\ell  \cap \acute{u}_{\ell,1})$.
Since both $\wb$ and $\ub_\ell$ belong to $S_P$ and
since $\acute{a} \cup \grave{u}_{\ell,2} \cup \acute{u}_{\ell,2} \subset \acute{w}_\ell$
and $\acute{u}_{\ell-1,1}  \cup \grave{w}_\ell \cup \acute{a}  \subset \acute{u}_{\ell,1}$, it follows that $\wb'$ belongs to $S_P$.

From Theorem~\ref{maintheorem}, 
the $\gamma$-polynomial of $h^*(\Ec_P, x) $ is
$$
W_{P}^{(\ell)}(4x) 
=  
\sum_{\pi \in {\mathcal L} (P)} 4^{\ {\rm pk}^{(\ell)}(\pi)} x^{\ {\rm pk}^{(\ell)}(\pi)}
$$
Since $4^{\ {\rm pk}^{(\ell)}(\pi)}$ is the number of
decorated permutations associated with $\pi$ in $S_P$,
this is equal to the $f$-polynomial of $\Gamma(S_P)$ as desired.
\end{proof}

\end{document}